\newtheorem{theorem}{Theorem}[section]
\newtheorem{lemma}[theorem]{Lemma}
\newtheorem{proposition}{Proposition}[section]
\newtheorem{corollary}[theorem]{Corollary}
\theoremstyle{definition}
\newtheorem{definition}[theorem]{Definition}
\theoremstyle{remark}
\numberwithin{equation}{section} \errorcontextlines=0
\begin{document}

\title[Modules of the toroidal Lie algebra $\widehat{\widehat{\mathfrak{sl}}}_{2}$]
{Modules of the toroidal Lie algebra $\hat{\hat{\mathfrak{sl}}}_{2}$}
\author{Naihuan Jing}
\address{Department of Mathematics,
   North Carolina State University,
   Raleigh, NC 27695, USA}
\email{jing@math.ncsu.edu}

\author{Chunhua Wang*}
\address{School of Mathematics,
South China University of Technology, Guangzhou, Guangdong 510640, China}
\email{tiankon\_g1987@163.com}

\keywords{Double affine Lie algebras, Verma module, integrability, irreducibility} 
\thanks{*Corresponding author.}

\begin{abstract}
Highest weight modules of the double affine Lie algebra $\widehat{\widehat{\mathfrak{sl}}}_{2}$ are studied under a
new triangular decomposition. Singular vectors of Verma modules are
determined using a similar condition with horizontal affine Lie
subalgebras, and highest weight modules are described under the
condition that $c_1>0$ and $c_2=0$.\\
\\
\textbf{MSC(2010):}  Primary: 17B67; Secondary: 17B10, 17B65.
\end{abstract}
\maketitle

\section{\bf Introduction}

Toroidal Lie algebras are multiloop generalization of the affine Lie
algebras. Their representations can be studied with similar but
distinctive methods as their affine counterparts.
Classification of irreducible integrable modules
with finite dimensional weight spaces has been carried out and properties of the integrable modules
have been investigated in
\cite{E1,E2,E3,E4,RJ}. Berman and Billig \cite{BB} constructed general modules by the standard
induction procedure and studied their irreducible quotients using
vertex operator techniques. As a special subalgebra of the toroidal Lie algebra, the
double affine algebras also have similar representation theory.
However, the integrable modules are no longer completely reducible
\cite{CT}.

In this paper, we use a different triangular decomposition to study
representations of the double affine Lie algebra $\mathfrak{T}$
\cite{JM}. We determine all singular vectors of the Verma
modules and give relatively easier description of their submodule
structures. When one canonical center is positive and the other
center is trivial, we are able to determine integrability and
irreducibility of the Verma modules, which then enable us to describe general highest
weight modules.

The paper is organized as follows. In section two, we describe a new
triangular decomposition of the double affine algebra of
$\mathfrak{sl_{2}}$. In section three, we study the Verma modules
based on the triangular decomposition. Using the affine Weyl
group, we fix the singular vectors of $M(\lambda)$ and study
integrability of the quotient $W(\lambda)$. In section four, we give
a necessary and sufficient condition for the irreducibility of
$M(\lambda)$ and $W(\lambda)$.

Throughout the paper, we will denote by $\mathbb{Z},\mathbb{Z}_{+}, \mathbb{N}$
the set of integers, nonnegative integers and positive integers,
respectively.

\section{\bf The toroidal Lie algebra
$\widehat{\widehat{\mathfrak{sl}}}_{2}$}

 Let
$\mathfrak{sl_2}(\mathbb{C})$ be the three dimensional simple Lie
algebra generated by $e, f, \alpha^{\vee}$ with the canonical
bilinear form given by $(e|f)=\frac
12(\alpha^{\vee}|\alpha^{\vee})=1$, where $\alpha$ is the simple
root. The toroidal Lie algebra
$\widehat{\widehat{\mathfrak{sl}}}_{2}=\mathfrak{sl_2}(\mathbb{C})\otimes
\mathbb C[t_1^{\pm 1}, t_2^{\pm
1}]\oplus\mathbb{C}c_1\oplus\mathbb{C}c_2\oplus\mathbb{C}d_1\oplus\mathbb{C}d_2$
is a central extension of the 2-loop algebra
$\mathfrak{sl_2}(\mathbb{C})\otimes \mathbb C[t_1^{\pm 1}, t_2^{\pm
1}]$ with the following Lie bracket:
\begin{subequations}
\begin{equation}
[x\otimes t^r,y\otimes t^s]=[x,y]\otimes
t^{r+s}+(x|y)\delta_{r_1,-s_1}\delta_{r_2,-s_2}(r_1 c_1+r_2 c_2),
\end{equation}
\begin{equation}
[x\otimes t^r,c_1]=[x\otimes t^r,c_2]=0,
\end{equation}
\begin{equation}
 [d_i,x\otimes t^r]=r_i
x\otimes t^r,\quad [d_i,c_j]=0
\end{equation}
\end{subequations}
where $x, y\in \mathfrak{sl_2}(\mathbb{C})$, $r=(r_1,r_2)\in
\mathbb{Z}^2$, $s=(s_1,s_2)\in \mathbb{Z}^2$, and
$t^r=t_1^{r_1}t_2^{r_2}$. In the following we also denote $x(m,n)=
x\otimes t_1^m t_2^n$.

The Cartan subalgebra is $\mathfrak{h}=\mathbb
C\alpha^{\vee}\oplus\mathbb{C}c_1\oplus\mathbb{C}c_2\oplus\mathbb{C}d_1\oplus\mathbb{C}d_2$,
where $c_1$ and $c_2$ are central elements. Let $\mathfrak{h}^{*}$
be the dual space of $\mathfrak{h}$. For a functional
$\beta\in\mathfrak{h}^{*}$, let $\mathfrak T_{\beta}=\{x\in\mathfrak
T|[h,x]=\beta(h)x, \, \forall \, h\in\mathfrak h\}$ be the root
subspace. The root system $\Delta$ of $\mathfrak T$ consists of all
nonzero $\beta\in\mathfrak{h}^{*}$ such that $\mathfrak
T_{\beta}\neq 0$.

Let $\delta_1,\delta_2,\omega_1,\omega_2\in \mathfrak{h}^{*}$ be the
linear functionals defined by $\delta_i (d_j)=\delta_{ij}$,
$\delta_i (c_j)=\delta_i (\alpha^{\vee})=0$ and $\omega_i
(c_j)=\delta_{ij}$, $\omega_i (d_j)=\omega_i (\alpha^{\vee})=0$ for
$i,j=1,2$. It is easy to check that the root system
$\Delta=\{\pm\alpha+\mathbb{Z}\delta_1+\mathbb{Z}\delta_2\}\cup(\{\mathbb{Z}\delta_1+\mathbb{Z}\delta_2\}\backslash
\{0\})$. Let $(\ \ |\ \ )$ be the invariant form on $\mathfrak{h}$
defined by
\[\begin{split}(\alpha^{\vee}|\alpha^{\vee})=2,~
(\alpha^{\vee}|c_i)=(\alpha^{\vee}|d_j)=0,\\
(c_i|d_j)=\delta_{ij},~(c_i|c_j)=(d_i|d_j)=0.\end{split}\]

Then the associated invariant form on $\mathfrak{h}^{*}$ is given by

\[\begin{split}(\alpha|\alpha)=&2,~(\alpha|\delta_i)=(\alpha|\omega_j)=0,\\
(\delta_i|\omega_j)=&\delta_{ij},~
(\delta_i|\delta_j)=(\omega_i|\omega_j)=0,\end{split}\] where
$i,j=1,2$. The real and imaginary roots are given respectively by
\begin{subequations}
\begin{equation}
\Delta^{re}=\{\pm\alpha+\mathbb{Z}\delta_1+\mathbb{Z}\delta_2\},
\end{equation}
\begin{equation}
\Delta^{im}=\{\mathbb{Z}\delta_1+\mathbb{Z}\delta_2\}\backslash\{0\}.
\end{equation}
\end{subequations}

Clearly
$\widehat{\mathfrak{g}_i}=\mathfrak{sl_2}(\mathbb{C})\otimes\mathbb{C}[t_i,t_i^{-1}]\oplus\mathbb{C}c_i\oplus\mathbb{C}d_i~(i=1,2)$
are two subalgebras of $\mathfrak T$ that are isomorphic to the
affine Lie algebra $A_1^{(1)}$. Denote the root system of
$\widehat{\mathfrak{g}_i}$ by $\Delta_{\widehat{\mathfrak{g}_i}}$.
It is well-known that $\Delta_{\widehat{\mathfrak{g}_1}}$ is
decomposed into positive and negative parts \cite{VG}:
$\Delta_{\widehat{\mathfrak{g}_1}}=\Delta_{\widehat{\mathfrak{g}_1}+}\cup\Delta_{\widehat{\mathfrak{g}_1}-}$,
where

\[\begin{aligned}\Delta_{\widehat{\mathfrak{g}_1}+}=&\{\alpha+\mathbb{Z}_{+}\delta_1\}\cup\{-\alpha+\mathbb{N}
\delta_1\}\cup\mathbb{N}\delta_1,\\
\Delta_{\widehat{\mathfrak{g}_1}-}=&\{-\alpha-\mathbb{Z}_{+}\delta_1\}\cup\{\alpha-\mathbb{N}\delta_1\}\cup\{-\mathbb{N}\delta_1\}.
\end{aligned}\]

Let $\alpha_1=\alpha$, $\alpha_0=\delta_1-\alpha$ and
$\alpha_{-1}=\delta_2-\alpha$, then all roots are integral linear
combination of $\alpha_i, i=-1, 0, 1$, and they are called the
``simple'' roots \cite{MS} of $\mathfrak T$. However some roots can
not be represented as negative or positive linear
combinations of the ``simple'' roots. In this paper, we view
$\mathfrak T$ as an affinization of the Lie algebra
$\widehat{\mathfrak{g}_1}$ and define the following partition of
$\Delta$:

\[\begin{aligned}\Delta_+=&\{\alpha+\mathbb{Z}_+ \delta_1+\mathbb{Z}_+
\delta_2\}\cup\{-\alpha+\mathbb{N}\delta_1+\mathbb{Z}_+
\delta_2\}\cup \{\mathbb{N}\delta_1+\mathbb{Z}_+ \delta_2\}\\
&\cup\{-\alpha-\mathbb{Z}_+
\delta_1+\mathbb{N}\delta_2\}\cup\{\alpha-\mathbb{N}\delta_1+\mathbb{N}\delta_2\}\cup
\{-\mathbb{N}\delta_1+\mathbb{N}\delta_2\}\cup\mathbb{N}\delta_2
\end{aligned}\]
and
\[\begin{aligned}
\Delta_{-}=&\{-\alpha-\mathbb{Z}_+ \delta_1-\mathbb{Z}_+
\delta_2\}\cup\{\alpha-\mathbb{N}\delta_1-\mathbb{Z}_+
\delta_2\}\cup \{-\mathbb{N}\delta_1-\mathbb{Z}_+ \delta_2\}\\
&\cup\{\alpha+\mathbb{Z}_{+}\delta_1-\mathbb{N}\delta_2\}\cup\{-\alpha+\mathbb{N}\delta_1-\mathbb{N}\delta_2\}\cup
\{\mathbb{N}\delta_1-\mathbb{N}\delta_2\}\cup\{-\mathbb{N}\delta_2\}.
\end{aligned}\]
The corresponding positive (resp. negative) root space is denoted by
$\mathfrak T_+=\bigoplus_{\beta\in\Delta_+}\mathfrak T_{\beta}$
(resp. $\mathfrak T_-=\bigoplus_{\beta\in\Delta_-}\mathfrak
T_{\beta}$). Then $\mathfrak T=\mathfrak T_+\oplus \mathfrak{h}
\oplus \mathfrak T_-$ is the associated triangular decomposition of
$\mathfrak T$.

Let $\mathcal {Q}_+=\mathbb Z_+$-span of $\Delta_+$. Similarly,
$\mathcal {Q}_{1+}=\mathbb Z_+$-span of
$\Delta_{\widehat{\mathfrak{g}_1}+}=\mathbb Z_+\alpha_0+\mathbb Z_+\alpha_1$ is the positive root lattice
of $\widehat{\mathfrak{g}_1}$. Let $\lambda,\mu\in\mathfrak{h}^{*}$.
We say that $\lambda\geq\mu$ if $\lambda-\mu$ is a nonnegative
linear combination of roots in $\Delta_+$.

The Weyl group of $\mathfrak T$ is defined as usual \cite{MS}.
\begin{definition}
For a real root $\beta=\pm\alpha+n_1\delta_1+n_2\delta_2$, we define
the reflection $r_{\beta}$ on $\mathfrak{h}^{*}$ by
\[r_{\beta}(\lambda)=\lambda-\lambda(\beta^{\vee})\beta,\] where
$\lambda\in\mathfrak{h}^{*}$ and
$\beta^{\vee}=\pm\alpha^{\vee}+n_1c_1+n_2c_2$. The Weyl group
$W_{\mathfrak T}$ is generated by $r_{\beta}~(\beta\in
\Delta^{re})$.
\end{definition}

\section{\bf The Verma module $M(\lambda)$}
In this section, we study highest weight modules of $\mathfrak
T$. Integrable modules are constructed by Chari \cite{CT} for double
affine Lie algebras, and a classification has been given by Rao
\cite{E1,E2,E3,E4} and Jiang \cite{RJ} for irreducible integrable
modules of the toroidal Lie algebras. We will take the new
triangular decomposition to study the Verma modules $M(\lambda)$.

\begin{definition} A module $M$ of $\mathfrak T$ is called a highest weight
module if there exists some $0\neq v\in M$ such that
\begin{enumerate}
\item the vector $v$ is a weight vector, that is $h.v=\lambda(h)v$ for some
$\lambda\in\mathfrak{h}^{*}$ and all $h\in\mathfrak{h}$,

\item $\mathfrak T_+.v=0$,

\item $U(\mathfrak T).v=M$.
\end{enumerate}
\end{definition}

\begin{definition}A module $M$ of $\mathfrak T$ is integrable if $M$
is a weight module and all $x_{\alpha}(m,n)$'s are locally
nilpotent, i.e. for any nonzero $v\in M$ there exists
$N=N(\alpha,m,n, v)$ such that $x_{\alpha}(m,n)^{N}.v=0$.
\end{definition}

\begin{definition}
A nonzero element $v\in M$ is called a singular vector if it is a
weight vector and $\mathfrak T_+.v=0$.
\end{definition}

Let $\lambda\in\mathfrak{h}^{*}$.  The one-dimensional vector space
$\mathbb{C}1_{\lambda}$ can be viewed as a $\mathfrak T_+\oplus
\mathfrak{h}-$module with $\mathfrak T_+.1_{\lambda}=0$ and
$h.1_{\lambda}=\lambda(h)\cdot1_{\lambda}$ for all $h\in
\mathfrak{h}$. Assume that the central element $c_1$ acts as a
scalar $k_1\geq 0$, and the other center $c_2$ acts trivially. Then we
have the induced Verma module:
\[M(\lambda)=U(\mathfrak
T)\otimes_{U(\mathfrak T_+\oplus \mathfrak{h})}
\mathbb{C}1_{\lambda},\]where $U(\mathfrak T)$ is the universal
enveloping algebra of $\mathfrak T$.

\begin{proposition}
\begin{enumerate}
\item $M(\lambda)$ is a $U(\mathfrak T_-)-$free module generated by the
highest weight vector: $1\otimes1_{\lambda}=v_{\lambda}$.

\item $\dim M(\lambda)_{\lambda}=1$; $0<\dim
M(\lambda)_{\lambda-\beta}<+\infty$ for every $\beta\in \mathcal
{Q}_{1+}$; Otherwise, $\dim M(\lambda)_{\lambda-\gamma}=\infty$ for
any $\gamma\in \mathcal {Q}_+$.

\item  The module $M(\lambda)$ has a unique irreducible quotient $L(\lambda)$.

\end{enumerate}
\end{proposition}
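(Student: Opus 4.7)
My plan for part (1) is to apply the PBW theorem directly. From the triangular decomposition $\mathfrak T = \mathfrak T_- \oplus \mathfrak h \oplus \mathfrak T_+$ established in Section~2 we obtain a vector-space isomorphism $U(\mathfrak T) \cong U(\mathfrak T_-) \otimes U(\mathfrak h \oplus \mathfrak T_+)$ which is also an isomorphism of right $U(\mathfrak h \oplus \mathfrak T_+)$-modules; inducing from $\mathbb C 1_\la$ then yields $M(\la) \cong U(\mathfrak T_-) \otimes_{\mathbb C} \mathbb C 1_\la$, so $M(\la)$ is free of rank one over $U(\mathfrak T_-)$ with generator $v_\la$. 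Part~(3) will follow by the standard argument: since $\dim M(\la)_\la = 1$ by (2) and $v_\la$ generates $M(\la)$, every proper submodule is contained in $\bigoplus_{\mu \ne \la} M(\la)_\mu$; hence the sum $N(\la)$ of all proper submodules is itself proper and therefore the unique maximal submodule, and $L(\la) := M(\la)/N(\la)$ is the unique irreducible quotient.

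The heart of the proposition is (2). The equality $\dim M(\la)_\la = 1$ is immediate from (1). For the remaining assertions the crucial structural observation I plan to exploit is that every root in $\Delta_+$ has $\delta_2$-coefficient $\ge 0$, with equality precisely when the root lies in $\Delta_{\widehat{\mathfrak{g}_1}+}$; dually every root in $\Delta_-$ has $\delta_2$-coefficient $\leq 0$ with equality iff it lies in $\Delta_{\widehat{\mathfrak{g}_1}-}$. For $\beta \in \mathcal Q_{1+}$, which has $\delta_2$-coefficient $0$, any PBW monomial in $U(\mathfrak T_-)$ of weight $-\beta$ must be a product of root vectors whose $\delta_2$-coefficients, each non-positive, sum to $0$; so each factor has $\delta_2$-coefficient $0$ and hence lies in $U((\widehat{\mathfrak{g}_1})_-)$. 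The count therefore reduces to the corresponding weight multiplicity in the affine Verma module for $\widehat{\mathfrak{g}_1}$, which is positive and finite by standard Kac partition-function theory.

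For $\gamma \in \mathcal Q_+ \setminus \mathcal Q_{1+}$ the same dichotomy forces the $\delta_2$-coefficient $m$ of $\gamma$ to be strictly positive. To produce infinitely many linearly independent PBW monomials of weight $\la-\gamma$ I plan to exploit the fact that both $k\delta_1 - m\delta_2$ and $-k\delta_1$ lie in $\Delta_-$ for every $k \geq 1$. Writing $\gamma = \ell\alpha + n\delta_1 + m\delta_2$ and fixing once and for all a ``core'' PBW monomial $Y \in U(\mathfrak T_-)$ that absorbs the $\alpha$-component and any residual $\delta_1$-shift, the family $\alpha^\vee(k,-m)\, \alpha^\vee(-(k+n'),0)\, Y \cdot v_\la$ indexed by sufficiently large $k$ consists of pairwise linearly independent PBW basis elements of weight $\la - \gamma$, which establishes infinite-dimensionality. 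The main technical obstacle is precisely this bookkeeping: one must verify across the seven families of $\Delta_-$ that the chosen factors really do lie in $\mathfrak T_-$ and are genuinely distinct in the ordered PBW basis for all possible signs of $\ell$ and $n$. This is routine but tedious, and is the only step where the peculiar shape of the new triangular decomposition (as opposed to a generic one) really has to be confronted.
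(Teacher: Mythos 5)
The paper states this proposition without any proof, so there is nothing to compare your argument against; on its own merits your proposal is correct and is essentially the standard argument one would expect the authors to have in mind. Parts (1) and (3) are the usual PBW/induced-module and unique-maximal-submodule arguments, and your key structural observation for part (2) checks out against all seven families in the paper's lists: a root of $\Delta_{\pm}$ has $\delta_2$-coefficient of sign $\pm$ or zero, with zero occurring exactly on $\Delta_{\widehat{\mathfrak{g}_1}\pm}$, so $U(\mathfrak T_-)_{-\beta}=U(\widehat{\mathfrak{g}_1}_-)_{-\beta}$ for $\beta\in\mathcal Q_{1+}$ and the finite positive multiplicities follow from the affine Kac partition function. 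The one step you flag as ``routine but tedious'' does close, and it is worth recording why: writing $\gamma=\ell\alpha+n\delta_1+m\delta_2$ with $m\geq 1$, one has $\ell\alpha+(n-n')\delta_1=(n-n'-\ell)\alpha_0'+\cdots$, more precisely $\ell\alpha+(n-n')\delta_1\in\mathcal Q_{1+}=\{(b-a)\alpha+a\delta_1: a,b\in\mathbb Z_+\}$ exactly when $n'\leq\min(n,\,n+\ell)$, so a single admissible shift $n'$ and a core monomial $Y\in U(\widehat{\mathfrak{g}_1}_-)$ of weight $-\ell\alpha-(n-n')\delta_1$ always exist; then for all $k\geq\max(1,1-n')$ the vectors $\alpha^{\vee}(k,-m)\alpha^{\vee}(-(k+n'),0)Yv_{\lambda}$ have weight $\lambda-\gamma$, the two new factors lie in the root spaces $\mathbb N\delta_1-\mathbb N\delta_2$ and $-\mathbb N\delta_1$ of $\Delta_-$, they commute with each other (the central term vanishes because $m\neq 0$), and their images in $\mathrm{gr}\,U(\mathfrak T_-)\cong S(\mathfrak T_-)$ are distinct monomials for distinct $k$, which gives linear independence by freeness from part (1). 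So no case analysis over the signs of $\ell$ and $n$ beyond the single inequality on $n'$ is actually needed, and your proof is complete.
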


We now determine all singular vectors of $M(\lambda)$. In the
following, we will consider the properties of $M(\lambda)$ and
$L(\lambda)$ under the assumptions that the highest weight $\lambda$
is dominant on $\widehat{\mathfrak{g}_1}$.

\begin{proposition}\label{6}
If $\lambda(\alpha_i^{\vee})=n_i~(i=0,1)$ are nonnegative integers,
then $\sum\limits_{i=0}^1U(\mathfrak T_-).y_i^{n_i+1}v_{\lambda}$ is
a proper submodule of $M(\lambda)$, where $y_1=y, y_0=x\otimes
t_1^{-1}$.
\end{proposition}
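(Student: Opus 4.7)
The plan is to show each vector $v_i := y_i^{n_i+1} v_\lambda$ ($i=0,1$) is a singular vector in $M(\lambda)$, and then to deduce properness of the generated submodule by a weight comparison. First, inspecting the partition of $\Delta_+$ in Section 2, I would split $\mathfrak T_+ = \widehat{\mathfrak{g}_1}_+ \oplus B$, where $B$ is the direct sum of all root spaces $\mathfrak T_\beta$ whose $\delta_2$-coefficient is strictly positive; a case-by-case reading of the seven families in $\Delta_+$ shows that $B$ is precisely $\mathfrak{sl}_2(\mathbb C) \otimes t_2 \mathbb C[t_1^{\pm 1}, t_2]$. It then suffices to verify $v_i$ is killed by each summand separately. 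Annihilation of $v_i$ by $\widehat{\mathfrak{g}_1}_+$ is the classical $A_1^{(1)}$ statement: $U(\widehat{\mathfrak{g}_1}_-) v_\lambda \subseteq M(\lambda)$ is the affine Verma module of dominant integral highest weight $\lambda|_{\widehat{\mathfrak{g}_1}}$, in which $y_i^{n_i+1} v_\lambda$ is the Kac singular vector. Since the annihilator of a vector is a Lie subalgebra and $\widehat{\mathfrak{g}_1}_+$ is Lie-generated by the two Chevalley elements $x$ and $y \otimes t_1$, it is enough to check that these two operators kill $v_i$, which reduces to standard $\mathfrak{sl}_2$-triple computations.

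The non-classical part is the annihilation $B \cdot v_i = 0$. I would establish the stronger claim that $u w v_\lambda = 0$ for all $u \in B$ and $w \in U(\widehat{\mathfrak{g}_1}_-)$, by induction on the length of $w$ as a PBW monomial. The base case $u v_\lambda = 0$ is immediate since $B \subseteq \mathfrak T_+$. For the inductive step, writing $w = g \cdot w'$ with $g \in \widehat{\mathfrak{g}_1}_-$, one has $u w v_\lambda = [u,g] w' v_\lambda + g(u w' v_\lambda)$; the second summand vanishes by induction on $w'$, and the first vanishes by induction provided $[u,g] \in B$. The crucial inclusion $[B, \widehat{\mathfrak{g}_1}] \subseteq B$ follows from two observations: the central contribution $(x_1|x_2)\delta_{r,-s}(r_1 c_1 + r_2 c_2)$ in the toroidal bracket cannot appear here, because $u$ has $t_2$-degree $\geq 1$ while $g$ has $t_2$-degree $0$, forcing $\delta_{r_2,-s_2} = 0$; and the remaining $\mathfrak{sl}_2$ part of $[u,g]$ lies in a root space whose $\delta_2$-coefficient is still $\geq 1$, hence in $B$.

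Properness of $\sum_{i=0}^{1} U(\mathfrak T_-) v_i$ is then a weight argument: $v_i$ has weight $\lambda - (n_i+1)\alpha_i$, which is strictly less than $\lambda$ in the order from $\Delta_+$, and $U(\mathfrak T_-)$ can only decrease weights, so in particular $v_\lambda$ cannot lie in the submodule. The main obstacle is the inductive $B$-annihilation step above: the positive roots with strictly positive $\delta_2$-coefficient are new compared to the usual affine triangular decomposition, and one must confirm that their adjoint stability under $\widehat{\mathfrak{g}_1}$, combined with the vanishing of cross-central terms, preserves the classical singular vector property in the enlarged toroidal setting.
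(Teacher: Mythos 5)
Your proof is correct and follows essentially the same route as the paper: both reduce the singular-vector check on $\mathfrak T_+$ to the subalgebra $\widehat{\mathfrak{g}_1}_+$ plus the root vectors of strictly positive $t_2$-degree, dispose of the latter, and quote the classical affine computation for the former. The only difference is in one step's mechanism: where you kill $B\cdot U(\widehat{\mathfrak{g}_1}_-)v_{\lambda}$ by an explicit bracket induction, the paper observes in one line that such vectors would have weight outside $\lambda-\mathcal Q_+$ (every element of $\mathcal Q_+$ has nonnegative $\delta_2$-coefficient) and hence must vanish.
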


\begin{proof} We need to show that
$y_i^{n_i+1}v_{\lambda}~(i=0,1)$ are singular vectors of
$M(\lambda)$, i.e, $\mathfrak T _+.y_i^{n_i+1}v_{\lambda}=0$ for
$i=0,1$. It suffice to show that $e(m,n)\in \mathfrak T_+$ and
$f(m^{'},n^{'})\in \mathfrak T_+$ act trivially on
$y_i^{n_i+1}.v_{\lambda}~(i=0,1)$. Because the weight of
$e(m,n)y_i^{n_i+1}.v_{\lambda}~(m\in\mathbb{Z}, n\in\mathbb{N})$ or
$f(m^{'},n^{'})y_i^{n_i+1}.v_{\lambda}~(m^{'}\in\mathbb{Z},
n^{'}\in\mathbb{N})$ is higher than $\lambda$, we get
$e(m,n)y_i^{n_i+1}.v_{\lambda}=f(m^{'},n^{'})y_i^{n_i+1}.v_{\lambda}=0~(m,m^{'}\in\mathbb{Z},
n,n^{'}\in\mathbb{N})$. Therefore, we only need to consider
$z.y_i^{n_i+1}.v_{\lambda}$ with $z\in\widehat{\mathfrak{g}_1}_+$.
But this is zero as it is the case of affine Lie algebras.
\end{proof}

Denote the quotient
$W(\lambda)=M(\lambda)/\sum\limits_{i=0}^1U(\mathfrak
T_-).y_i^{n_i+1}v_{\lambda}$.

Let $W_{\widehat{\mathfrak{g}_1}}$ be the Weyl group of the affine
Lie algebra $\widehat{\mathfrak{g}_1}$ defined as above. It has two
generators given by the simple reflections $r_{\alpha_0}$ and
$r_{\alpha_1}$. For arbitrary $w\in W_{\widehat{\mathfrak{g}_1}}$,
we define $w\cdot \lambda =w(\lambda+\rho)-\rho$, where $\rho$
satisfies $\rho(\alpha_i^{\vee})=1~(i=0,1)$.

\begin{corollary}For arbitrary $w\in W_{\widehat{\mathfrak{g}_1}}$, the module $M(w\cdot
\lambda)$ is a submodule of $M(\lambda)$.
\end{corollary}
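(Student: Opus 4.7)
The plan is to use Proposition \ref{6} as a one-step building block and iterate along a reduced expression for $w$ in $W_{\widehat{\mathfrak{g}_1}}$. First, for a simple reflection $r_{\alpha_i}$ ($i=0,1$), the singular vector $y_i^{n_i+1} v_\lambda$ produced by Proposition \ref{6} has weight $\lambda-(n_i+1)\alpha_i$, which equals $r_{\alpha_i}\cdot \lambda$ by the very definition of the dot action. The universal property of the Verma module then yields a $\mathfrak T$-module homomorphism $M(r_{\alpha_i}\cdot \lambda)\to M(\lambda)$ sending the canonical generator to $y_i^{n_i+1}v_\lambda$. Since $M(\lambda)$ is $U(\mathfrak T_-)$-free by part (1) of the preceding proposition, the image is $U(\mathfrak T_-)$-free on this singular vector, so the homomorphism is injective. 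This settles the case $\ell(w)=1$.

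Next I would induct on the length $\ell(w)$. Given $w=r_{\alpha_j}w'$ with $\ell(w)=\ell(w')+1$, the inductive hypothesis provides an embedding $M(w'\cdot\lambda)\hookrightarrow M(\lambda)$, and it remains to embed $M(w\cdot\lambda)=M(r_{\alpha_j}\cdot(w'\cdot\lambda))$ into $M(w'\cdot\lambda)$ by reusing the base case with highest weight $w'\cdot\lambda$. For this one needs $(w'\cdot\lambda)(\alpha_j^\vee)\in\mathbb Z_+$. A short calculation gives
\[(w'\cdot\lambda)(\alpha_j^\vee)=(\lambda+\rho)(w'^{-1}\alpha_j^\vee)-1,\]
and the length condition $\ell(r_{\alpha_j}w')>\ell(w')$ is equivalent to $w'^{-1}\alpha_j$ being a positive real root of $\widehat{\mathfrak{g}_1}$. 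Because $(\lambda+\rho)(\alpha_i^\vee)=n_i+1\in\mathbb N$ for $i=0,1$ and positive real roots of $A_1^{(1)}$ pair nonnegatively with $\lambda+\rho$ (this is the standard fact that $\lambda+\rho$ is dominant regular), the right-hand side is a nonnegative integer. Proposition \ref{6} now supplies a singular vector in $M(w'\cdot\lambda)$ of weight $r_{\alpha_j}\cdot(w'\cdot\lambda)=w\cdot\lambda$, and universal property plus $U(\mathfrak T_-)$-freeness give the desired embedding $M(w\cdot\lambda)\hookrightarrow M(w'\cdot\lambda)\hookrightarrow M(\lambda)$.

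The main obstacle I expect is precisely the positivity verification in the inductive step: the intermediate weights $w'\cdot\lambda$ are typically no longer dominant, so one cannot simply reapply the hypothesis of Proposition \ref{6} blindly. What saves the argument is the standard affine Kac-Moody lemma relating the length function on $W_{\widehat{\mathfrak{g}_1}}$ to the set of positive real roots sent to negative ones; once this is granted, concatenating the one-step embeddings along a reduced word for $w$ produces the required chain of submodules.
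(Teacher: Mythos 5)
Your proof is correct and follows the same route the paper indicates---induction on $\ell(w)$ using Proposition \ref{6} as the one-step building block, with the universal property of Verma modules and $U(\mathfrak T_-)$-freeness (hence injectivity, since $U(\mathfrak T_-)$ is a domain) giving the embeddings. In fact you supply the key detail the paper's one-line proof omits: the verification that $(w'\cdot\lambda)(\alpha_j^\vee)=(\lambda+\rho)((w'^{-1}\alpha_j)^\vee)-1$ is a nonnegative integer whenever $\ell(r_{\alpha_j}w')>\ell(w')$, which is exactly what is needed to keep producing singular vectors at the non-dominant intermediate weights.
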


\begin{proof} This can be proved by induction
on the length of $w$ as in Proposition \ref{6}.
\end{proof}

We now look at the integrability of $W(\lambda)$, where
$\lambda(\alpha_i^{\vee})=n_i\geq 0$, $i=0, 1.$

\begin{proposition}
Suppose that $c_1$ acts on $W(\lambda)$ as a scalar $k_1>0$ and
$c_2$ is trivial. Then $W(\lambda)$ is not integrable.
\end{proposition}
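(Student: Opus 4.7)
I will exhibit a single real root vector of $\mathfrak{T}$ that fails to act locally nilpotently on $v_\lambda\in W(\lambda)$ by constructing an $\mathfrak{sl}_2$-triple inside $\mathfrak{T}$ for which $v_\lambda$ is a highest-weight vector of strictly negative integer weight; irreducibility of the resulting $\mathfrak{sl}_2$-Verma module will then prevent any power of the lowering operator from vanishing. Concretely, take $e_0:=f(-1,1)=f\otimes t_1^{-1}t_2$ and $f_0:=e(1,-1)=e\otimes t_1 t_2^{-1}$. A check against the explicit description of $\Delta_\pm$ shows that the weight $-\alpha-\delta_1+\delta_2$ of $e_0$ lies in $\{-\alpha-\mathbb{Z}_+\delta_1+\mathbb{N}\delta_2\}\subset\Delta_+$, and the weight $\alpha+\delta_1-\delta_2$ of $f_0$ lies in $\{\alpha+\mathbb{Z}_+\delta_1-\mathbb{N}\delta_2\}\subset\Delta_-$.

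\textbf{Key computation.} Using the defining bracket of $\mathfrak{T}$,
\[
h_0:=[e_0,f_0]=-\alpha^\vee-c_1+c_2,
\]
which on $W(\lambda)$ acts as $-\alpha^\vee-k_1$ since $c_2=0$. Because $c_1,c_2$ are central, $[h_0,e_0]=2e_0$ and $[h_0,f_0]=-2f_0$, so $\{e_0,f_0,h_0\}$ is a genuine $\mathfrak{sl}_2$-triple acting on $W(\lambda)$. Since $e_0\in\mathfrak{T}_+$ we have $e_0 v_\lambda=0$, while $h_0 v_\lambda=-(n_1+k_1)v_\lambda$. Because $k_1\geq 1$ and $n_1\geq 0$, the eigenvalue $-(n_1+k_1)\leq -1$ is not a nonnegative integer, so the $\mathfrak{sl}_2$-Verma module of highest weight $-(n_1+k_1)$ is irreducible. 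The cyclic $\mathfrak{sl}_2$-submodule of $W(\lambda)$ generated by $v_\lambda$ is a nonzero quotient of this irreducible Verma module, hence isomorphic to it. Thus $(f_0)^k v_\lambda\neq 0$ in $W(\lambda)$ for every $k\geq 0$, so the real root vector $e(1,-1)$ is not locally nilpotent on $v_\lambda$, and $W(\lambda)$ fails to be integrable.

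\textbf{Main obstacle.} The only delicate step is choosing the auxiliary triple so that the central elements contribute to $h_0$ with exactly the right signs: the $\delta_2$-components of $e_0$ and $f_0$ are arranged to put a $+c_2$ into $h_0$ (which vanishes under the hypothesis $c_2=0$), while the $\delta_1$-components are arranged to put a $-c_1$ into $h_0$, which combined with $c_1>0$ is what forces the $h_0$-eigenvalue on $v_\lambda$ to be strictly negative. Once this triple is in place, the remaining verifications---that its root vectors lie in $\mathfrak{T}_\pm$ according to the stated partition of $\Delta$, that the Cartan commutation relations hold, and that an $\mathfrak{sl}_2$-Verma module of negative integer highest weight is irreducible---are routine.
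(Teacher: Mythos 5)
Your proof is correct, and it takes a genuinely different route from the paper's. The paper argues directly with the vector $e(0,-1)$ (root $\alpha-\delta_2$): assuming a minimal $N$ with $e(0,-1)^N w_\lambda=0$, it applies $f(0,1)$ to force $N=1$ and $n_1=0$, and then must handle that residual case separately by computing $f(1,0)e(0,-1)w_\lambda=-\alpha^\vee(1,-1)w_\lambda$ and pairing with $\alpha^\vee(-1,1)$ to extract the factor $-2k_1\neq 0$. Your choice of triple $e_0=f(-1,1)$, $f_0=e(1,-1)$, $h_0=-\alpha^\vee-c_1+c_2$ is engineered so that the coroot already contains $-c_1$; hence the $h_0$-eigenvalue of $w_\lambda$ is $-(n_1+k_1)<0$ \emph{unconditionally}, the rank-one Verma module it generates is irreducible, and no case split on $n_1$ is needed. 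This buys a shorter, more conceptual argument at the price of invoking the standard irreducibility criterion for $\mathfrak{sl}_2$-Verma modules rather than a bare commutator computation; the paper's version stays entirely elementary but pays for it with the two-stage contradiction. Two small points to fix: the hypothesis is $k_1>0$, not $k_1\geq 1$, so you should only claim $-(n_1+k_1)<0$ (which still suffices, since irreducibility of the $\mathfrak{sl}_2$-Verma module requires only that the highest weight not be a nonnegative integer, and indeed $k_1$ need not be an integer at all); and you should note explicitly that the image $w_\lambda$ of $v_\lambda$ is nonzero in $W(\lambda)$, which is exactly the content of the paper's Proposition asserting that $\sum_{i}U(\mathfrak T_-)y_i^{n_i+1}v_\lambda$ is a \emph{proper} submodule.
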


\begin{proof}
Let $W(\lambda)=U(\mathfrak T_-).w_{\lambda}$, where $w_{\lambda}$
is the image of $v_{\lambda}$ in $W(\lambda)$. We claim that
$e(0,-1)$ is not locally nilpotent. In fact, suppose that $e(0, -1)$
is nilpotent on $w_{\lambda}$ and assume that $N$ is the minimum
positive integer such that $e(0,-1)^N.w_{\lambda}=0$. Then we have
\begin{equation*}
\begin{aligned}
0=&f(0,1)e(0,-1)^N.w_{\lambda}\\=& [f(0,1),e(0,-1)^{N}].w_{\lambda}\\
=&-Ne(0,-1)^{N-1}((N-1)\cdot 1-(c_2-\alpha^{\vee})).w_{\lambda}\\
=&-N(N-1+n_1)e(0,-1)^{N-1}.w_{\lambda},
\end{aligned}
\end{equation*}
where we have used
$\lambda(\alpha_{-1}^{\vee})=\lambda(c_2-\alpha^{\vee})=-n_1$.

By the minimality of $N$ we obtain that $(N-1)+n_1= 0$, then $N=1$
and $n_1= 0$. Then $e(0,-1).w_{\lambda}=0$ and
\begin{equation}\label{7}
f(1,0)e(0,-1).w_{\lambda}=-\alpha^{\vee}(1,-1).w_{\lambda}=0.
\end{equation}

Applying $\alpha^{\vee}(-1,1)$ to Eq. (\ref{7}), we obtain
\begin{equation*}
\alpha^{\vee}(-1,1)\alpha^{\vee}(1,-1).w_{\lambda}=(\alpha^{\vee}|\alpha^{\vee})(-c_1+c_2).w_{\lambda}=-2k_1
w_{\lambda}=0.
\end{equation*}
This is a contradiction as we have assumed $k_1>0$. Therefore, the
quotient $W(\lambda)$ is not integrable.
\end{proof}

\begin{proposition}
Suppose that $c_1$ acts as a positive constant $k_1$ and $c_2$ is
trivial. Then some weight spaces of $W(\lambda)$ are infinite
dimensional.
\end{proposition}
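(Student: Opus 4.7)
The plan is to exhibit, inside a single weight space of $W(\lambda)$, an infinite linearly independent family, thereby showing that $W(\lambda)_{\lambda-\delta_2}$ is infinite dimensional. For each integer $m\geq 1$ set
\[
v_m := \alpha^{\vee}(m,-1)\,\alpha^{\vee}(-m,0)\,w_\lambda.
\]
Both factors lie in $\mathfrak T_-$: the weight $m\delta_1-\delta_2$ of $\alpha^{\vee}(m,-1)$ belongs to $\{\mathbb{N}\delta_1-\mathbb{N}\delta_2\}\subset\Delta_-$, and the weight $-m\delta_1$ of $\alpha^{\vee}(-m,0)$ belongs to $\{-\mathbb{N}\delta_1-\mathbb{Z}_+\delta_2\}\subset\Delta_-$. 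Thus each $v_m$ is a nonzero candidate weight vector of weight $\lambda-\delta_2$ in $W(\lambda)$.

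To separate the $v_m$'s I would introduce the ``dual'' elements
\[
T_m := \alpha^{\vee}(-m,1)\,\alpha^{\vee}(m,0)\in U(\mathfrak T_+),\qquad m\geq 1,
\]
whose factors lie in $\mathfrak T_+$ for the symmetric reason. Using the bracket $[x\otimes t^r,y\otimes t^s]=[x,y]\otimes t^{r+s}+(x|y)\delta_{r_1,-s_1}\delta_{r_2,-s_2}(r_1c_1+r_2c_2)$ together with $c_1\mapsto k_1$ and $c_2\mapsto 0$, the three identities to verify are
\[
[\alpha^{\vee}(m,0),\alpha^{\vee}(n,-1)]=0,\qquad [\alpha^{\vee}(m,0),\alpha^{\vee}(-n,0)]=2mk_1\delta_{m,n},
\]
\[
[\alpha^{\vee}(-m,1),\alpha^{\vee}(m,-1)]=-2mk_1,
\]
the first vanishing because $\delta_{0,1}=0$, and the last because $c_2$ acts trivially. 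Combined with $\alpha^{\vee}(m,0)w_\lambda=\alpha^{\vee}(-m,1)w_\lambda=0$ (both operators sit in $\mathfrak T_+$ for $m\geq 1$), a routine commutation then yields
\[
T_m v_n = -4m^2k_1^2\,\delta_{m,n}\,w_\lambda.
\]

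Since $k_1>0$ and $w_\lambda\neq 0$ in $W(\lambda)$, the $T_m$ serve as detectors that diagonally separate the family: any relation $\sum c_m v_m = 0$ forces $c_n=0$ by applying $T_n$. Hence the $v_m$ are linearly independent in $W(\lambda)_{\lambda-\delta_2}$, and this weight space is infinite dimensional. The main point requiring care is the bookkeeping of the central terms; the vanishing of $c_2$ and the positivity of $k_1$ are both essential in order that the detector scalar $-4m^2k_1^2$ be nonzero and purely diagonal. Beyond this, the argument is a standard Heisenberg-type orthogonality computation, and I expect no further obstacle.
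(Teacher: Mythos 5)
Your proof is correct and follows essentially the same route as the paper: both exhibit an infinite family of quadratic monomials in the elements $\alpha^{\vee}(m,n)$ applied to $w_{\lambda}$ inside a single weight space, and separate them by applying raising operators whose commutators produce the nonzero central scalar $2mk_1$. The only differences are cosmetic --- the paper works in weight $\lambda-2\delta_2$ with the vectors $\alpha^{\vee}(-m,-1)\alpha^{\vee}(m,-1)w_{\lambda}$ and a one-step detector $\alpha^{\vee}(s,1)$ (plus the preliminary observation that $\alpha^{\vee}(s,-1)w_{\lambda}\neq 0$), whereas your two-step detector $T_m$ lands directly on a nonzero multiple of $w_{\lambda}$ and so avoids that extra step.
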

\begin{proof} Observe that $\alpha^{\vee}(-m,-1).w_{\lambda}\neq 0$,
$m\in\mathbb{N}$. Otherwise, we have
$$\alpha^{\vee}(m,1)\alpha^{\vee}(-m,-1).w_{\lambda}=[\alpha^{\vee}(m,1),\alpha^{\vee}(-m,-1)].w_{\lambda}=2mk_{1}w_{\lambda}=0.$$
Similarly we also get that $\alpha^{\vee}(m,-1).w_{\lambda}\neq0$
for $m\in\mathbb{N}$.

We claim that
$\{\alpha^{\vee}(-m,-1)\alpha^{\vee}(m,-1).w_{\lambda},m\in\mathbb{N}\}$
is linearly independent. Suppose there exist $a_m\neq 0$ such that
\begin{equation}\label{*}
\sum\limits_m
a_m\alpha^{\vee}(-m,-1)\alpha^{\vee}(m,-1).w_{\lambda}=0.
\end{equation}
Let $s\in\{m|a_m\neq 0\}$. Applying $\alpha^{\vee}(s,1)$ to
Eq. (\ref{*}), we obtain

\[\begin{aligned}
0=&\sum\limits_m a_m([\alpha^{\vee}(s,1),\alpha^{\vee}(-m,-1)]\alpha^{\vee}(m,-1)\\
&+\alpha^{\vee}(-m,-1)[\alpha^{\vee}(s,1),\alpha^{\vee}(m,-1)]).w_{\lambda} \\
 =& \sum\limits_m a_m\delta_{s,m}(\alpha^{\vee}|\alpha^{\vee})\alpha^{\vee}(m,-1)sk_1.w_{\lambda}\\
=& 2a_ssk_1\alpha^{\vee}(1,-m).w_{\lambda}.
\end{aligned}\]
This contradiction proves our claim. \end{proof}

\section{\bf Highest weight modules of $\widehat{\widehat{\mathfrak{sl}}}_{2}$}
Futorny \cite{F} studied the imaginary Verma modules (IVM) for
affine Lie algebras and proved that an IVM is irreducible if and
only if $\lambda(c)\neq 0$. In this subsection, we prove
an irreducibility criterion for Verma modules
$M(\lambda)$ when $c_1\neq 0$ and $c_2=0$.
\begin{lemma}\label{4}
Let $0\neq v\in M(\lambda)$ and $M=\bigoplus\limits_{\eta\in
\mathcal{Q}_{1+}} M(\lambda)_{\lambda-\eta}$. Then $U(\mathfrak
T)v\cap M\neq 0 $.
\end{lemma}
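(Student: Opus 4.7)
The plan is to exploit the $\mathbb{Z}$-grading of $\mathfrak{T}$ induced by $\mathrm{ad}(d_{2})$, reducing the lemma to the problem of lifting $v$ to the top of this grading. Since every root in $\Delta_{+}$ has nonnegative $\delta_{2}$-coefficient, any weight $\lambda - \gamma$ of $M(\lambda)$ satisfies $\gamma(d_{2}) \geq 0$, with equality precisely when $\gamma \in \mathcal{Q}_{1+}$ (the positive roots of $\delta_{2}$-coefficient $0$ being exactly $\Delta_{\widehat{\mathfrak{g}_{1}}+}$). Hence $M(\lambda) = \bigoplus_{k \geq 0} M(\lambda)^{k}$, where $M(\lambda)^{k}$ is the $d_{2}$-eigenspace with eigenvalue $\lambda(d_{2}) - k$, and $M = M(\lambda)^{0}$, which by PBW coincides with $U(\widehat{\mathfrak{g}_{1}}_{-}) v_{\lambda}$.

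Given $0 \neq v \in M(\lambda)$, I would first decompose $v = \sum_{k \geq 0} v^{[k]}$ in this grading. Since $d_{2} \in \mathfrak{T}$ acts semisimply with only finitely many eigenvalues on $v$, each component $v^{[k]}$ can be extracted from $v$ via a suitable interpolating polynomial in $d_{2}$, hence lies in $U(\mathfrak{T}) v$. If $v^{[0]} \neq 0$ we are done, as $v^{[0]} \in M$; otherwise take $m \geq 1$ minimal with $v^{[m]} \neq 0$ and replace $v$ by $v^{[m]}$.

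The heart of the argument is then the assertion: for any nonzero $w \in M(\lambda)^{m}$ with $m \geq 1$, there exists $X \in \mathfrak{T}^{\geq 1} := \bigoplus_{j \geq 1} \mathfrak{T}^{j} \subset \mathfrak{T}_{+}$ with $Xw \neq 0$. Granted this, $Xw$ lies in some $M(\lambda)^{m - l}$ with $1 \leq l \leq m$; re-extracting the top $d_{2}$-component after each application and iterating, we strictly drop the depth at least once per round and reach $M(\lambda)^{0} = M$ after finitely many steps.

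To prove the assertion I would write $w$ in PBW form $w = \sum_{I} u_{I} w_{I}$, with $u_{I}$ ordered monomials in $U(\mathfrak{T}^{\leq -1})$ of total $d_{2}$-weight $-m$ and $w_{I} \in U(\widehat{\mathfrak{g}_{1}}_{-}) v_{\lambda}$. Every $X \in \mathfrak{T}^{\geq 1}$ annihilates each $w_{I}$ (the image would exceed the top $d_{2}$-weight), so $Xw = \sum_{I} [X, u_{I}] w_{I}$. The task reduces to choosing $X = x(a,b)$ with $x \in \{e, f, \alpha^{\vee}\}$ and $b \geq 1$, matched to the leading factor of a maximal $u_{I_{0}}$ in a suitable PBW ordering, so that $[X, u_{I_{0}}]$ contributes a dominant term carrying a nonzero central piece of the form $-a k_{1}$, while the other $[X, u_{I}] w_{I}$ yield strictly smaller PBW monomials. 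The main obstacle is precisely this last bookkeeping: refining the total order on PBW monomials so the leading term of $[X, u_{I_{0}}] w_{I_{0}}$ cannot be cancelled by the remaining contributions. The assumption $c_{1} = k_{1} \neq 0$ (with $c_{2} = 0$) is essential here, as it ensures that the relevant Heisenberg-type brackets $[\alpha^{\vee}(a,b), \alpha^{\vee}(-a,-b)] = 2 a k_{1}$ do not collapse.
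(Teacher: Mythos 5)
Your reduction via the $d_2$-grading is sound and reproduces the skeleton of the paper's argument: both proofs descend on the $\delta_2$-height, and both rest on the observation that any $X\in\mathfrak{T}_+$ of positive $\delta_2$-degree annihilates the depth-zero part $M=U(\widehat{\mathfrak{g}_1}\cap\mathfrak{T}_-)v_{\lambda}$, so that only the commutators $[X,u_I]$ survive in $Xw$. But the entire content of the lemma is the assertion you isolate and then do not prove: that for a nonzero $w$ of positive depth some such $X$ satisfies $Xw\neq 0$. You say explicitly that ``the main obstacle is precisely this last bookkeeping'' of ordering the PBW monomials so that the leading term of $[X,u_{I_0}]w_{I_0}$ cannot be cancelled, and you stop there. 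That is a restatement of the difficulty, not a resolution of it; as written the proposal has a genuine gap at the one step that carries the weight of the lemma.

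Moreover, the mechanism you propose for producing an uncancellable term --- pairing $\alpha^{\vee}(a,b)$ against $\alpha^{\vee}(-a,-b)$ to extract the central scalar $2ak_1$ --- differs from the paper's and is weaker. The paper avoids cancellation by taking $z\in\mathfrak{T}_{-\gamma+n_{s(1)1}\delta_2}$ with $\gamma\in\Delta_{\widehat{\mathfrak{g}_1}+}$ chosen of very large $\delta_1$-height: the bracket of $z$ with the selected PBW factor (the one with minimal $\delta_2$-degree, maximal exponent, maximal $\phi$) then lands in a $\widehat{\mathfrak{g}_1}$-root space that no other commutator in the sum can reach, so no cancellation is possible and no central term --- hence no hypothesis $k_1\neq 0$ --- is required. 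Your Heisenberg pairing only addresses factors of type $\alpha^{\vee}(a,b)$; for factors of type $e(a,b)$ or $f(a,b)$ the analogous bracket produces a Cartan element whose eigenvalue on $w_I$ may vanish, and in every case you would still have to rule out cancellation among distinct monomials $u_I$ sharing the same leading factor. To complete your argument you would need to specify the order, the choice of $X$, and the non-cancellation verification concretely, which is exactly what the paper's choice of $n_{s(1)1}$, of the maximal exponent, and of a sufficiently large $\gamma$ accomplishes.
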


\begin{proof}
Suppose $v\in M(\lambda)_{\lambda-\mu}$, where $\mu\in \mathcal
{Q}_+$. Let $\mu=\sum\limits_{i=0}^1 n_i\alpha_i+k\delta_2$ for
$n_i\in\mathbb{Z}$ and $k\in\mathbb{Z}_+$. Define the height of
$\mu$ by $ht(\mu)=k$. If $k=0$, then
$\lambda-\mu=\lambda-\sum\limits_{i=0}^1 n_{i}\alpha_i$ for some
nonnegative integers $n_i$, so the result holds. Suppose $k>0$,
since $M(\lambda)$ is a free $U(\mathfrak T_-)-$module, there exists
a homogenous element $u\in U(\mathfrak T_-)$ such that $v=u
v_{\lambda}$. By the PBW theorem
\[u=\sum_{p}X_{\phi_{1p}-n_{1p}\delta_2}^{l_{1p}}X_{\phi_{2p}-n_{2p}\delta_2}^{l_{2p}}\cdots
X_{\phi_{s(p)p}-n_{s(p)p}\delta_2}^{l_{s(p)p}}u_{p},\] where
$u_{p}\in U(\widehat{\mathfrak{g}_1}-)$,
$X_{\phi_{ip}-n_{ip}\delta_2}\in \mathfrak T_-$, $\phi_{ip}\in
\Delta_{\widehat{\mathfrak{g}_1}}$, $l_{ip}, n_{ip}\in\mathbb{N}$,
and $k=\sum_{i}n_{ip}l_{ip}~(i=1,2,\ldots,s(p))$ for all $p$. If
$i\neq j$, $\phi_{ip}-n_{ip}\delta_2\neq\phi_{jp}-n_{jp}\delta_2$
for all $p$. We will also assume $n_{1p}\geq n_{2p}\geq \cdots \geq
n_{s(p)p}$ for all $p$. Next we consider the set
$\Omega\subset\{n_{ip}\delta_2\}$ consisting of $\varphi_{ip}$ such
that $ht(-\varphi_{ip})=\min_p \{n_{s(p)p}\}$. In $\Omega$, we
consider the subset $\Omega^{'}$ consisting of $\varphi_{s(p)p}$
such that $l_{s(j)j}\geq l_{s(p)p}$ for all $\varphi_{ij}\in\Omega$.
We then take a subset $\Omega_0$ in $\Omega^{'}$ consisting of all
$\varphi_{s(p)p}$ such that $\phi_{ij}\geq\phi_{s(p)p}$. Without
loss of generality, we can assume $\phi_{s(1)1}\in\Omega_0$. Since
$\phi_{s(1)1}-n_{s(1)1}\delta_2=\phi_{ip}-n_{ip}\delta_2$ and
$l_{s(1)1}=l_{ip}$, we have $i=s(p)$. Choose sufficiently large
$\gamma\in\Delta_{\widehat{\mathfrak{g}_1}+}$ such that
$\gamma>\phi_{ij}$ for all $i, j$ and
$ht_{\widehat{\mathfrak{g}_1}+}(\gamma-\phi_{s(1)1})<ht_{\widehat{\mathfrak{g}_1}+}(-u_p)$
for all $p$. Let $0\neq z\in \mathfrak
T_{{\color{blue}-}\gamma+n_{s(1)1}\delta_2}$. We have
$zu_pv_{\lambda}=0$ for all $p$ since the weight of
$zu_pv_{\lambda}$ is larger than $\lambda$. The choice of
$n_{s(1)1}$ and $\gamma$ ensures that $zv\neq0$. Since
$ht(\mu+\gamma{\color{blue}-}n_{s(1)1}\delta_2)<ht(\mu)$, by
induction hypothesis we get $U(\mathfrak T)(zv)\cap M\neq 0$. Then
$U(\mathfrak T)v\cap M\neq 0$ because $U(\mathfrak T)(zv)\subset
U(\mathfrak T)v$.
\end{proof}

Let $M(\lambda)^+=\{v\in M(\lambda)|\mathfrak T_+.v=0\}$. Clearly it
is $\mathfrak{h}$-invariant. For an arbitrary nonzero element $v\in
M(\lambda)^+$, we see that $U(\mathfrak T_-).v$ is a submodule of
$M(\lambda)$. As for the form of elements in $M(\lambda)^+$, we have
the following result.
\begin{corollary}
$M (\lambda)^+\subset M$.
\end{corollary}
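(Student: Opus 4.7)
The plan is to reduce to the weight-vector case and then combine Lemma \ref{4} with a $\delta_2$-grading argument to conclude that the weight of any singular vector must lie in $\mathcal{Q}_{1+}$.

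First I would use the $\mathfrak{h}$-invariance of $M(\lambda)^+$ (noted just before the corollary) to decompose an arbitrary $v\in M(\lambda)^+$ into weight components; each component again lies in $M(\lambda)^+$, so it suffices to treat a nonzero weight vector $v\in M(\lambda)_{\lambda-\mu}\cap M(\lambda)^+$ for some $\mu\in\mathcal{Q}_+$. Since $\mathfrak{T}_+\cdot v=0$ and $v$ is a weight vector, the PBW decomposition $U(\mathfrak{T})=U(\mathfrak{T}_-)U(\mathfrak{h})U(\mathfrak{T}_+)$ gives $U(\mathfrak{T})v = U(\mathfrak{T}_-)v$, so every weight appearing in $U(\mathfrak{T})v$ is of the form $\lambda-\mu-\xi$ with $\xi\in\mathcal{Q}_+\cup\{0\}$.

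Next I would invoke Lemma \ref{4}: there exists a nonzero element in $U(\mathfrak{T})v\cap M$. Taking a nonzero weight component of that element, we find an $\eta\in\mathcal{Q}_{1+}$ and a $\xi\in\mathcal{Q}_+\cup\{0\}$ with
\[
\lambda-\mu-\xi=\lambda-\eta,\qquad\text{i.e.,}\qquad \mu+\xi=\eta\in\mathcal{Q}_{1+}.
\]
The goal now is to deduce $\mu\in\mathcal{Q}_{1+}$. For this I would examine the $\delta_2$-component of everything in sight. Inspection of the list defining $\Delta_+$ shows that every root in $\Delta_+$ has $\delta_2$-coefficient in $\mathbb{Z}_+$; hence every element of $\mathcal{Q}_+$ has nonnegative $\delta_2$-coefficient. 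On the other hand $\mathcal{Q}_{1+}=\mathbb{Z}_+\alpha_0+\mathbb{Z}_+\alpha_1$ has $\delta_2$-coefficient identically zero. Reading the equation $\mu+\xi=\eta$ in the $\delta_2$-direction forces the $\delta_2$-coefficients of both $\mu$ and $\xi$ to vanish.

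The final step is to observe that the subset of $\mathcal{Q}_+$ on which the $\delta_2$-coefficient vanishes is exactly $\mathcal{Q}_{1+}$: the roots in $\Delta_+$ with zero $\delta_2$-component are precisely $\alpha+\mathbb{Z}_+\delta_1$, $-\alpha+\mathbb{N}\delta_1$, and $\mathbb{N}\delta_1$, i.e.\ the set $\Delta_{\widehat{\mathfrak{g}_1}+}$, whose $\mathbb{Z}_+$-span is $\mathcal{Q}_{1+}$. Hence $\mu\in\mathcal{Q}_{1+}$ and $v\in M$. The only mild subtlety is the $\delta_2$-grading bookkeeping in this last step, but it is a direct check from the explicit description of $\Delta_+$; no deeper obstacle is expected, since Lemma \ref{4} has already done the heavy lifting.
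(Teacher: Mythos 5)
Your proposal is correct and follows essentially the same route as the paper: both reduce to the observation that $U(\mathfrak{T})v=U(\mathfrak{T}_-)v$ and then invoke Lemma \ref{4}, the paper phrasing it as a contradiction while you argue directly. The only difference is that you explicitly justify, via the $\delta_2$-grading of $\Delta_+$, why a weight $\lambda-\mu$ with $\mu\notin\mathcal{Q}_{1+}$ can never be pushed back into $M$ by elements of $\mathcal{Q}_+$ — a step the paper asserts without proof — which is a worthwhile addition but not a different method.
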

\begin{proof}
Suppose there exists a nonzero $v\in M(\lambda)^+$, and the weight
of $v$ is not of the form $\lambda-\beta~(\beta\in\mathcal
{Q}_{1+})$. Since $U(\mathfrak T)v= U(\mathfrak T_-)v$, the weight
of every element in $U(\mathfrak T)v$ can not be of the form
$\lambda-\gamma$ for any $\gamma\in\mathcal {Q}_{1+}$. Hence
$U(\mathfrak T)v\cap M=0$, which contradicts with Lemma \ref{4}.
\end{proof}

The subspace $M$ can be viewed as a Verma
$\widehat{\mathfrak{g}_1}-$module. Kac and Kazhdan \cite{KK} gave a
necessary and sufficient condition for the reducibility of the Verma
modules for affine Lie algebras.

\begin{theorem}\cite{KK}\label{5}
The Verma module $V(\lambda)$ of $\widehat{\mathfrak{g}_1}$ is
reducible if and only if for some positive root $\beta$ of the
algebra $\widehat{\mathfrak{g}_{1}}$ and some positive integer $l$,
one has $(\lambda+\rho)(\beta^{\vee})=l$, where
$\rho(\alpha_i^{\vee})=1~(i=0,1)$. Then $V(\lambda-l\beta)$ are
submodules of $V(\lambda)$.
\end{theorem}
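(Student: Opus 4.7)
The plan is to follow the classical Shapovalov-form strategy, which is the standard route to the Kac--Kazhdan reducibility criterion. First, using the Chevalley antiinvolution of $\widehat{\mathfrak{g}_1}$ that swaps positive and negative root spaces while fixing the Cartan, I would introduce the unique contravariant symmetric bilinear form $S$ on $V(\lambda)$ normalized by $S(v_\lambda, v_\lambda)=1$. A standard argument shows that the radical of $S$ equals the maximal proper submodule of $V(\lambda)$, so reducibility of $V(\lambda)$ is equivalent to the restriction $S_\eta$ of $S$ to the finite-dimensional weight space $V(\lambda)_{\lambda-\eta}$ being degenerate for some $\eta$ in the positive root lattice of $\widehat{\mathfrak{g}_1}$.

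The core step is to establish the Kac--Kazhdan determinant formula
\[
\det S_\eta(\lambda) \;=\; C_\eta \prod_{\beta \in \Delta_{\widehat{\mathfrak{g}_1}+}} \prod_{l \geq 1} \Bigl( (\lambda+\rho\,|\,\beta) - \tfrac{l}{2}(\beta|\beta) \Bigr)^{P(\eta - l\beta)\,\mathrm{mult}(\beta)},
\]
where $P$ is the Kostant partition function for $\widehat{\mathfrak{g}_1}_-$ and $C_\eta$ is a nonzero scalar. For a real positive root $\beta$ one has $(\beta|\beta)=2$, so the vanishing of one of the factors reads exactly $(\lambda+\rho)(\beta^\vee)=l$; for an imaginary root $\beta=n\delta_1$ one has $(\beta|\beta)=0$ with $\beta^\vee$ a nonzero multiple of $c_1$, giving a vanishing of the same form. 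Reading off the zero locus of $\det S_\eta$ therefore yields the ``only if'' direction at once. For the ``if'' direction and the submodule claim one produces an explicit singular vector of weight $\lambda - l\beta$: for real $\beta$ this comes from the $\mathfrak{sl}_2$-triple $(e_\beta, \beta^\vee, f_\beta)$ via the usual reflection formula, and for imaginary $\beta$ it is extracted from the Heisenberg subalgebra sitting inside $\widehat{\mathfrak{g}_1}$. Either construction yields the claimed embedding $V(\lambda - l\beta) \hookrightarrow V(\lambda)$.

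The main obstacle is establishing the determinant formula itself, since a direct PBW expansion is intractable. The cleanest route is via Jantzen's filtration: deform $\lambda \mapsto \lambda + t\mu$ for generic $\mu$, expand $\det S_\eta(\lambda+t\mu)$ as a formal power series in $t$, and use the Jantzen sum formula
\[
\sum_{i \geq 1} \mathrm{ch}\, V(\lambda)^i \;=\; \sum_{\beta,\, l} \mathrm{ch}\, V(\lambda - l\beta),
\]
summed over pairs $(\beta,l)$ satisfying the vanishing condition, in combination with the obvious upper bound on the degree of $\det S_\eta$ coming from the PBW basis of $U(\widehat{\mathfrak{g}_1}_-)$. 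Matching the order of vanishing of the deformed determinant with the predicted exponents $P(\eta-l\beta)\,\mathrm{mult}(\beta)$ forces equality in the product formula. Once this formula is in hand, both directions of the equivalence and the submodule statement $V(\lambda - l\beta) \subset V(\lambda)$ follow immediately by inspecting its factors.
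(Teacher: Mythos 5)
The paper offers no proof of this statement: it is quoted verbatim from Kac--Kazhdan \cite{KK} and used as a black box, so there is no internal argument to compare yours against. Judged on its own, your outline is the standard (and correct) strategy for proving the Kac--Kazhdan criterion: contravariant Shapovalov form, radical equals the maximal proper submodule, the determinant formula
$\det S_\eta=C_\eta\prod_{\beta}\prod_{l\ge 1}\bigl((\lambda+\rho|\beta)-\tfrac{l}{2}(\beta|\beta)\bigr)^{P(\eta-l\beta)\,\mathrm{mult}\,\beta}$,
and reading off reducibility from the zero locus. The ``if'' direction in particular needs nothing beyond the formula: taking $\eta=l\beta$, the exponent $P(0)\,\mathrm{mult}\,\beta\ge 1$ forces $\det S_{l\beta}=0$, hence a nonzero radical.

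Two places in your sketch are softer than you present them. First, the claim that for an arbitrary \emph{real} positive root $\beta$ the singular vector of weight $\lambda-l\beta$ ``comes from the $\mathfrak{sl}_2$-triple $(e_\beta,\beta^{\vee},f_\beta)$ via the usual reflection formula'' is only immediate when $\beta$ is simple (where $f_i^{l}v_\lambda$ works); for non-simple real roots the explicit singular vectors are the content of Malikov--Feigin--Fuks \cite{MF}, and Kac--Kazhdan themselves establish the embedding $V(\lambda-l\beta)\subset V(\lambda)$ by a leading-term/specialization argument rather than an explicit formula. Second, your plan to \emph{derive} the determinant formula from the Jantzen sum formula with right-hand side $\sum_{\beta,l}\mathrm{ch}\,V(\lambda-l\beta)$ is circular as stated: that form of the sum formula is normally a consequence of the determinant formula. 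The non-circular version computes the leading term of $\det S_\eta$ (as a polynomial on $\mathfrak h^*$) directly from the PBW basis to get the total degree and the exponents $P(\eta-l\beta)\,\mathrm{mult}\,\beta$, proves divisibility by each linear factor separately, and then matches degrees; the Jantzen filtration can replace the divisibility step but not supply the sum formula for free. With those two repairs your argument is the standard proof of the cited theorem.
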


\begin{theorem}\label{8}
The module $M(\lambda)$ is reducible if and only if for some
positive root $\beta$ of the algebra $\widehat{\mathfrak{g}_1}$ and
some positive integer $l$, one has $(\lambda+\rho)(\beta^{\vee})=l$,
where $\rho(\alpha_i^{\vee})=1~(i=0,1)$.
\end{theorem}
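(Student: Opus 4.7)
The plan is to reduce both directions to the Kac--Kazhdan criterion (Theorem~\ref{5}) by identifying the subspace $M = \bigoplus_{\eta\in\mathcal{Q}_{1+}} M(\lambda)_{\lambda-\eta}$ with the classical $\widehat{\mathfrak{g}_1}$-Verma module of highest weight $\lambda|_{\widehat{\mathfrak{g}_1}}$. The key structural feature I would exploit is a clean $d_2$-dichotomy of the new triangular decomposition: every root in $\Delta_-$ has nonpositive $d_2$-eigenvalue with equality exactly on $\Delta_{\widehat{\mathfrak{g}_1}-}$, and dually every root in $\Delta_+$ has nonnegative $d_2$-eigenvalue with equality exactly on $\Delta_{\widehat{\mathfrak{g}_1}+}$. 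Combined with PBW this forces $M = U(\widehat{\mathfrak{g}_1}_-) v_{\lambda}$ freely, so $M$ is isomorphic as a $\widehat{\mathfrak{g}_1}$-module to $V(\lambda|_{\widehat{\mathfrak{g}_1}})$, and the same observation shows $M$ is $\widehat{\mathfrak{g}_1}$-stable inside $M(\lambda)$.

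For the sufficiency direction, assume $(\lambda+\rho)(\beta^{\vee})=l$ for some $\beta\in\Delta_{\widehat{\mathfrak{g}_1}+}$ and $l\in\mathbb{N}$. Theorem~\ref{5} furnishes a nonzero $\widehat{\mathfrak{g}_1}$-singular vector $w\in M$ of weight $\lambda-l\beta$. To upgrade $w$ to a $\mathfrak T$-singular vector I would decompose
\[
\mathfrak T_+ \;=\; \widehat{\mathfrak{g}_1}_+ \;\oplus\; \bigoplus_{\gamma\in\Delta_+,\,\gamma(d_2)>0}\mathfrak T_\gamma .
\]
The first summand annihilates $w$ by construction; any $z\in\mathfrak T_\gamma$ with $\gamma(d_2)>0$ would send $w$ to a prospective weight $\lambda-l\beta+\gamma$ whose $d_2$-eigenvalue strictly exceeds $\lambda(d_2)$. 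Since every weight of $M(\lambda)$ has $d_2$-eigenvalue at most $\lambda(d_2)$, such an image must vanish, so $U(\mathfrak T)w$ is a proper nonzero submodule and $M(\lambda)$ is reducible.

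For the necessity direction, let $N$ be any proper nonzero submodule of $M(\lambda)$. Pick $0\neq v\in N$ and apply Lemma~\ref{4} to obtain $0\neq U(\mathfrak T)v\cap M\subseteq N\cap M$. Since $\widehat{\mathfrak{g}_1}\subset\mathfrak T$ preserves $N$ automatically and preserves $M$ by the $d_2$-weight observation above, $N\cap M$ is a nonzero $\widehat{\mathfrak{g}_1}$-submodule of $M$; it is proper because $v_{\lambda}\in N$ would force $N=M(\lambda)$. Hence $V(\lambda|_{\widehat{\mathfrak{g}_1}})\cong M$ is reducible, and Theorem~\ref{5} delivers the desired $\beta$ and $l$.

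The main obstacle I expect is the transfer step in the sufficiency direction, namely ensuring that a Kac--Kazhdan singular vector for $\widehat{\mathfrak{g}_1}$ remains singular for the full algebra. The $d_2$-weight argument handles this cleanly precisely because the positive roots outside $\widehat{\mathfrak{g}_1}_+$ all live in strictly positive $d_2$-grading and therefore land outside the weight support of $M(\lambda)$; once this structural observation is in place, no commutator calculations beyond those already used in the preceding propositions should be required.
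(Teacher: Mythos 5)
Your proposal is correct and follows essentially the same route as the paper: sufficiency by promoting a Kac--Kazhdan singular vector of $M$ to a $\mathfrak T$-singular vector because the remaining positive root spaces raise the $d_2$-eigenvalue out of the weight support (the paper's ``weight higher than $\lambda$'' remark), and necessity via Lemma~\ref{4} combined with the $\widehat{\mathfrak{g}_1}$-module structure of $M$ and Theorem~\ref{5}. The only differences are cosmetic: you make the $d_2$-grading dichotomy explicit and argue necessity directly from a proper submodule $N$, whereas the paper argues the contrapositive.
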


\begin{proof} Suppose that $M(\lambda)$ is reducible. Assume that on the contrary
that there does not exist a root $\beta$ of the algebra
$\widehat{\mathfrak{g}_1}$ and a positive integer $l$ such that
$(\lambda+\rho)(\beta^{\vee})=l$. By Theorem \ref{5}, $M$
is irreducible as a $\widehat{\mathfrak{g}_1}$-module. By Lemma
\ref{4}, for arbitrary $0\neq v\in M(\lambda)$, we have $U(\mathfrak
T)v\cap M\neq0$. Since $U(\mathfrak T)v\cap M$ is a
$\widehat{\mathfrak{g}_1}$-submodule of $M$, $U(\mathfrak T)v\cap M=M$ by the
$\widehat{\mathfrak{g}_1}$-irreducibility of $M$. Then $M\subset U(\mathfrak T)v$ and
$v_{\lambda}\in U(\mathfrak T)v$. Subsequently $U(\mathfrak T)v=
M(\lambda)$. Therefore $M(\lambda)$ is irreducible, which is a contradiction
to our assumption and
we have proved the necessary direction. 

On the other hand, if for some positive root $\beta$
of the algebra $\widehat{\mathfrak{g}_1}$,
$(\lambda+\rho)(\beta^{\vee})=l$ holds for a positive integer
$l$, then $M$ is reducible as a Verma module for
$\widehat{\mathfrak{g}_1}$ by Theorem \ref{5}, i.e., there exists
some singular vector $v\notin \mathbb Cv_{\lambda}$ of $M$.
Meanwhile, it is also a singular vector of $M(\lambda)$ since the
weight of $z(m,n).v~(z\in \mathfrak {sl}_2, m\in\mathbb{Z}, n\in
\mathbb{N})$ is higher than $\lambda$. Thus $M(\lambda)$ is
reducible.
\end{proof}

\begin{corollary}
Let $\lambda\in\mathfrak{h}^{*}$ such that
$\lambda(\alpha_i^{\vee})~(i=0,1)$ are nonnegative. Then
$W(\lambda)\cong L(\lambda)$.
\end{corollary}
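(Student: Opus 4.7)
The plan is to show that $W(\lambda)$ is irreducible; since $W(\lambda)$ is a nonzero quotient of $M(\lambda)$ by Proposition~\ref{6} and $L(\lambda)$ is the unique irreducible quotient of $M(\lambda)$, this forces $W(\lambda)\cong L(\lambda)$. Let $\pi\colon M(\lambda)\twoheadrightarrow W(\lambda)$ be the projection, $\bar v_\lambda=\pi(v_\lambda)$, and $\bar M=\pi(M)$.

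First I would pin down the $\widehat{\mathfrak{g}_1}$-structure of $\bar M$. Since any element of $U(\mathfrak T_-)$ whose action on $v_\lambda$ lies in $M$ must have total $\delta_2$-weight zero and hence sits in $U(\widehat{\mathfrak{g}_1}_-)$, one checks that $\ker\pi\cap M$ equals the $\widehat{\mathfrak{g}_1}$-submodule $N(M):=\sum_{i=0}^{1}U(\widehat{\mathfrak{g}_1}_-)\,y_i^{n_i+1}v_\lambda$ of $M$. Hence $\bar M\cong M/N(M)$ as a $\widehat{\mathfrak{g}_1}$-module, and by the classical affine $A_1^{(1)}$ theory applied to the dominant integral weight $\lambda$ at positive level $k_1=n_0+n_1>0$, this quotient is the irreducible integrable module $L_{\widehat{\mathfrak{g}_1}}(\lambda)$.

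The key step is the analogue of Lemma~\ref{4} for $W(\lambda)$: for every $0\neq\bar v\in W(\lambda)$, $U(\mathfrak T)\bar v\cap\bar M\neq 0$. I would prove this by transporting the proof of Lemma~\ref{4} to the induced-module description $W(\lambda)\cong U(\mathfrak T_-^{(-)})\otimes\bar M$, where $\mathfrak T_-^{(-)}$ denotes the span of negative root spaces of strictly negative $\delta_2$-weight; this identification is immediate from PBW together with the observation that $\mathfrak T_+^{(+)}:=(\mathfrak T_+)_{\delta_2>0}$ annihilates $\bar M$ by weight. Writing $\bar v$ in PBW form, picking a dominant term with labels $\phi_{s(1)1},n_{s(1)1}$ exactly as in Lemma~\ref{4}, and choosing $\gamma\in\Delta_{\widehat{\mathfrak{g}_1}+}$ sufficiently large so that $0\neq z\in\mathfrak T_{-\gamma+n_{s(1)1}\delta_2}\subset\mathfrak T_+^{(+)}$, the same commutator computation yields $\bar z\bar v$ of strictly smaller $\delta_2$-height, and induction on $\delta_2$-height eventually lands in $\bar M$.

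Given this step, the conclusion is immediate: for any nonzero submodule $K\subset W(\lambda)$, $K\cap\bar M$ is a nonzero $\widehat{\mathfrak{g}_1}$-submodule of the irreducible $\bar M$, hence equals $\bar M$, so $\bar v_\lambda\in K$ and $K=W(\lambda)$. The main obstacle I anticipate is verifying that $\bar z\bar v$ is genuinely nonzero in $W(\lambda)$ rather than merely in $M(\lambda)$; this is a PBW-bookkeeping check, made transparent by the vector-space decomposition $W(\lambda)\cong U(\mathfrak T_-^{(-)})\otimes\bar M$, within which the dominant-term argument of Lemma~\ref{4} carries through unchanged.
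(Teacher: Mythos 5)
Your overall architecture is sound and in fact more explicit than the paper's own argument: the paper simply invokes the maximality of $\sum_{i}U(\widehat{\mathfrak{g}_1}{}_-)y_i^{n_i+1}v_\lambda$ inside $M$ together with Theorems \ref{5} and \ref{8} and declares $W(\lambda)$ irreducible, whereas you correctly identify that what is actually needed is (i) the identification $\bar M\cong M/N(M)\cong L_{\widehat{\mathfrak{g}_1}}(\lambda)$ (your $\delta_2$-grading argument for $\ker\pi\cap M=N(M)$ and for $W(\lambda)\cong U(\mathfrak T_-^{(-)})\otimes\bar M$ is correct) and (ii) a $W(\lambda)$-version of Lemma \ref{4}. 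The genuine gap is at the step you yourself flag and then dismiss. In Lemma \ref{4} the nonvanishing of $zv$ rests on $M(\lambda)$ being a \emph{free} $U(\mathfrak T_-)$-module: the leading term of $zv$ is a PBW monomial in $\mathfrak T_-^{(-)}$ applied to $[z,X_{\phi_{s(1)1}-n_{s(1)1}\delta_2}]u_pv_\lambda$, and since $[z,X_{\phi_{s(1)1}-n_{s(1)1}\delta_2}]$ is a nonzero element of $\widehat{\mathfrak{g}_1}{}_-$, freeness guarantees this term survives. In $W(\lambda)\cong U(\mathfrak T_-^{(-)})\otimes\bar M$ the corresponding leading term is a monomial tensored with $[z,X]\bar u_p$, where $\bar u_p$ now lives in the quotient $\bar M=L_{\widehat{\mathfrak{g}_1}}(\lambda)$, which is \emph{not} free over $U(\widehat{\mathfrak{g}_1}{}_-)$; a nonzero negative root vector can perfectly well annihilate a nonzero vector there (already $y_1\bar v_\lambda=0$ when $n_1=0$). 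So the dominant-term argument does not ``carry through unchanged,'' and this is precisely the point where the irreducibility could fail.

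The gap is closable, but it requires an ingredient absent from Lemma \ref{4}: integrability of $\bar M$ and positivity of the level. Concretely, if $x_{-\beta}\bar u=0$ for some $0\neq\bar u\in L_{\widehat{\mathfrak{g}_1}}(\lambda)_{\lambda-\mu}$ and a real root $\beta\in\Delta_{\widehat{\mathfrak{g}_1}+}$, then $\bar u$ is a lowest weight vector for the $\mathfrak{sl}_2$-triple attached to $\beta$, forcing $(\lambda-\mu)(\beta^{\vee})\leq 0$; since $(\lambda-\mu)(\beta^{\vee})\to+\infty$ along $\beta=\pm\alpha+n\delta_1$, $n\to\infty$, when $k_1>0$, a sufficiently large choice of $\gamma$ (with its finite part chosen so that the bracket $[z,X]$ is nonzero) rules this out and rescues the leading term. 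You should supply this, and you should also note that it forces $k_1=n_0+n_1>0$ (your ``positive level'' aside is thus load-bearing, not incidental), whereas the corollary as printed assumes only $n_0,n_1\geq 0$; the degenerate case $n_0=n_1=0$ needs separate treatment. With that supplement your proof is complete and follows a genuinely different, and more fully justified, route than the paper's two-line deduction from Theorems \ref{5} and \ref{8}.
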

\begin{proof}
Since $\lambda(\alpha_i^{\vee})~(i=0,1)$ are nonnegative,
 $\sum\limits_{i=0}^1U(\mathfrak
T_-).y_i^{n_i+1}v_{\lambda}$ is a maximal submodule of $M$ as
$\widehat{\mathfrak{g}_1}-$module by \cite{VG}. According to Theorem
\ref{5} and Theorem \ref{8}, we know that the reducibility of the
$U(\mathfrak{T})-$module $M(\lambda)$ is equivalent to that of $M$
as $\widehat{\mathfrak{g}_1}-$module. Hence, $W(\lambda)$ is
irreducible.
\end{proof}

\begin{corollary}
Let
$\Delta^{+}(\lambda)=\{(\beta,l)|(\lambda+\rho)(\beta^{\vee})=l, \beta\in\widehat{\mathfrak{g}_1}, l\in\mathbb N\}$.
Then
$J(\lambda)=\sum_{(\beta,l)\in\Delta^{+}(\lambda)}
M(\lambda-l\beta)$ is the maximal submodule of $M(\lambda)$. If
$V$ is the highest weight module of weight $\lambda$. Then $V\cong
M(\lambda)/N(\lambda)$, where $N(\lambda)\subset J(\lambda)$.
\end{corollary}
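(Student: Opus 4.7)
First I would establish that $J(\lambda)$ is a proper submodule of $M(\lambda)$. For each $(\beta,l)\in\Delta^+(\lambda)$, Theorem~\ref{5} produces a $\widehat{\mathfrak g_1}$-singular vector $w_{\lambda-l\beta}$ at weight $\lambda-l\beta$ inside $M=V(\lambda)$. The same weight argument used in the sufficiency half of the proof of Theorem~\ref{8}---that $z(m,n).w_{\lambda-l\beta}$ has weight strictly above $\lambda-l\beta$ for $z\in\mathfrak{sl}_2$, $m\in\mathbb Z$ and $n\in\mathbb N$---upgrades $w_{\lambda-l\beta}$ to a $\mathfrak T$-singular vector. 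Hence $M(\lambda-l\beta)=U(\mathfrak T_-)w_{\lambda-l\beta}$ is a $\mathfrak T$-submodule of $M(\lambda)$, and so is the sum $J(\lambda)$. Its $\lambda$-weight space is zero, so $v_\lambda\notin J(\lambda)$ and $J(\lambda)\subsetneq M(\lambda)$.

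To establish maximality, I would prove that $\widetilde M:=M(\lambda)/J(\lambda)$ is irreducible, mirroring the irreducibility argument of Theorem~\ref{8}. Write $\pi$ for the quotient map. By the full Kac--Kazhdan structure theorem, every $\widehat{\mathfrak g_1}$-singular vector of $V(\lambda)$ lies inside some $V(\lambda-l\beta)$ with $(\beta,l)\in\Delta^+(\lambda)$, so $J(\lambda)\cap M$ coincides with the maximal $\widehat{\mathfrak g_1}$-submodule $\sum V(\lambda-l\beta)$ of $V(\lambda)$; consequently $\pi(M)\subset\widetilde M$ is isomorphic to the irreducible $\widehat{\mathfrak g_1}$-module $L_{\widehat{\mathfrak g_1}}(\lambda)$. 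For any nonzero $\mathfrak T$-submodule $\widetilde N\subset\widetilde M$, if one can show $\widetilde N\cap\pi(M)\neq 0$, then that intersection is a nonzero $\widehat{\mathfrak g_1}$-submodule of the irreducible $\pi(M)$, hence equals $\pi(M)$; in particular $\bar v_\lambda\in\widetilde N$, forcing $\widetilde N=\widetilde M$.

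The main obstacle is thus the descent of Lemma~\ref{4} to the quotient: for any nonzero $\bar v\in\widetilde M$, one needs $U(\mathfrak T)\bar v\cap\pi(M)\neq 0$. The plan is to choose a lift $v\in M(\lambda)\setminus J(\lambda)$ and rerun the proof of Lemma~\ref{4}, producing $z\in\mathfrak T_{-\gamma+n_{s(1)1}\delta_2}$ (with $\gamma\in\Delta_{\widehat{\mathfrak g_1}+}$ taken sufficiently large) such that $zv\neq 0$ has strictly smaller $\delta_2$-height, and then to sharpen the choice of $\gamma$ so that additionally $zv\notin J(\lambda)$ (equivalently $z\bar v\neq 0$ in $\widetilde M$). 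Since each $M(\lambda-l\beta)$ is generated by a single explicit singular vector $w_{\lambda-l\beta}$, the combinatorial freedom in $\gamma$ should push $zv$ into weight regions unreachable by bounded $U(\mathfrak T_-)$-products against the finitely many relevant generators. Inducting on $\delta_2$-height then delivers the quotient lemma. The second assertion of the corollary is immediate: any highest weight $\mathfrak T$-module $V$ of weight $\lambda$ is a quotient $M(\lambda)/N(\lambda)$ by universality, with $N(\lambda)\subsetneq M(\lambda)$ since $V\neq 0$, so maximality of $J(\lambda)$ forces $N(\lambda)\subset J(\lambda)$.
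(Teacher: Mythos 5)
The paper states this corollary without any proof, so there is no argument of the authors to compare against; your proposal therefore has to stand on its own, and as written it has two genuine gaps. The first is your identification of $J(\lambda)\cap M$ with the maximal $\widehat{\mathfrak{g}_1}$-submodule of the affine Verma module $M$, which you attribute to ``the full Kac--Kazhdan structure theorem.'' Theorem~\ref{5} as quoted in the paper gives only a reducibility criterion plus the existence of the submodules $V(\lambda-l\beta)$; it does not say that every singular vector of $V(\lambda)$ lies in one of them, nor that their sum is the maximal submodule. That is a strictly stronger structural statement about Verma modules of $A_1^{(1)}$ (it is what the Malikov--Feigin--Fuks circle of results \cite{MF} and the BGG-type chain condition supply at non-critical level, where every deeper Verma submodule $M(\mu)$ sits inside a one-step $M(\lambda-l\beta)$), and it needs a citation or an argument; note also that $\Delta^{+}(\lambda)$ involves $\beta^{\vee}$ for imaginary $\beta$, where the paper has only defined coroots for real roots and where the Kac--Kazhdan condition takes a different form, so the critical-level case requires separate care.

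The second and more serious gap is the step you yourself call ``the main obstacle'': descending Lemma~\ref{4} to the quotient $M(\lambda)/J(\lambda)$, i.e.\ producing $z$ with $zv\notin J(\lambda)$ rather than merely $zv\neq 0$. You offer only a plan (``the combinatorial freedom in $\gamma$ should push $zv$ into weight regions unreachable\dots''), not a proof, and this is precisely where the maximality assertion lives: Lemma~\ref{4} as it stands only yields $N\cap M\subseteq J(\lambda)\cap M$ for a proper submodule $N$, which does not give $N\subseteq J(\lambda)$, and the alternative route through irreducibility of $M(\lambda)/J(\lambda)$ is circular without the quotient lemma. What your argument does establish is that $J(\lambda)$ is a proper submodule built from $\mathfrak T$-singular vectors (the weight argument upgrading $\widehat{\mathfrak{g}_1}$-singular vectors is correct, since every element of $\mathcal{Q}_{+}$ has nonnegative $\delta_2$-coefficient), and the final deduction $N(\lambda)\subset J(\lambda)$ from universality of $M(\lambda)$ is fine once maximality is actually in hand.
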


\vskip30pt \centerline{\bf ACKNOWLEDGMENTS}

This work is partially supported by 
Simons Foundation (No. 198129), NSFC grants (11271138, 11531004) and China Scholarship Council.
\bigskip

\end{document}